\newenvironment{proof}{\noindent {\bf Proof:}}{\hfill $\Box$}
\newtheorem{theorem}{Theorem}
\newtheorem{lemma}{Lemma}
\newtheorem{assumption}{Assumption}
\title{\bf Strong duality in Lasserre's hierarchy for polynomial optimization}
\begin{document}

\author{C\'edric Josz$^{1,2}$, Didier Henrion$^{3,4,5}$}

\footnotetext[1]{French transmission system operator R\'eseau de Transport d'Electricit\'e (RTE), 9 rue de la Porte de
Buc, BP 561, F-78000 Versailles, France.}
\footnotetext[2]{INRIA Paris-Rocquencourt, BP 105, F-78153 Le Chesnay, France. {\tt cedric.josz@inria.fr}}
\footnotetext[3]{CNRS, LAAS, 7 avenue du colonel Roche, F-31400 Toulouse, France. {\tt henrion@laas.fr}}
\footnotetext[4]{Universit\'e de Toulouse, LAAS, F-31400 Toulouse, France.}
\footnotetext[5]{Faculty of Electrical Engineering, Czech Technical University in Prague,
Technick\'a 2, CZ-16626 Prague, Czech Republic}

\date{Draft of \today}

\maketitle

\begin{abstract}
A polynomial optimization problem (POP) consists of minimizing a multivariate real polynomial on a semi-algebraic
set $K$ described by polynomial inequalities and equations. In its full generality it is a non-convex,
multi-extremal, difficult global optimization problem.  More than an decade ago, J.~B.~Lasserre
proposed to solve POPs by a hierarchy of convex semidefinite programming (SDP) relaxations
of increasing size. Each problem in the hierarchy has a primal SDP formulation (a relaxation of
a moment problem) and a dual SDP formulation (a sum-of-squares representation
of a polynomial Lagrangian of the POP). In this note, when the POP feasibility set $K$ is compact,
we show that there is no duality gap between each primal and dual SDP problem
in Lasserre's hierarchy, provided a redundant ball constraint is added to the
description of set $K$. Our proof uses elementary results on SDP duality,
and it does not assume that $K$ has an interior point.
\end{abstract}

\section{Introduction}
Consider the following polynomial optimization problem (POP)
\begin{equation}
\label{eq:pop}
\begin{array}{ll}
\inf_x & f(x) := \sum_\alpha f_{\alpha} x^\alpha \\
\mathrm{s.t.} & g_i(x) := \sum_\alpha g_{i,\alpha} x^\alpha \geq 0, \quad i=1,\ldots,m
\end{array}
\end{equation}
where we use the multi-index notation $x^\alpha := x_1^{\alpha_1} \cdots x_n^{\alpha_n}$ for $x \in {\mathbb R}^n$,
$\alpha \in {\mathbb N}^n$ and
where the data are polynomials $f, g_1, \ldots, g_m \in {\mathbb R}[x]$
so that in the above sums only a finite number of coefficients
$f_{\alpha}$ and $g_{i,\alpha}$ are nonzero. Assume that the feasibility set
\[
K:=\{x \in {\mathbb R}^n \: :\: g_i(x) \geq 0, \: i=1,\ldots,m\}
\]
is nonempty and bounded, so that the above infimum is attained. To solve POP (\ref{eq:pop}),
Lasserre \cite{lasserre-2000,lasserre-2001} proposed a semidefinite programming (SDP) relaxation hierarchy
with guarantees of global and generically finite convergence  \cite{nie14} provided an algebraic assumption holds:

\begin{assumption}\label{archimedean}
There exists a polynomial $u \in \mathbb{R}[x]$ such that $\{x\in \mathbb{R}^n  \: :\: u(x)\geq 0\}$ is bounded
and $u=u_0 + \sum_{i=1}^m u_i g_i $ where polynomials $u_i \in {\mathbb R}[x]$, $i=0,1,\ldots,m$
are sums of squares (SOS) of other polynomials.
\end{assumption}

Assumption \ref{archimedean} can be difficult to check computationally (as the degrees of the SOS
multipliers can be arbitrarily large), and it is often replaced
by the following slightly stronger assumption:

\begin{assumption}\label{ball}
Let $R>0$ be the radius of an Euclidean ball including set $K$, and add the
redundant ball constraint $g_{m+1}(x) = R^2 - \sum_{i=1}^n x_i^2 \geq 0$ to the description of $K$.
\end{assumption}

Indeed, under Assumption \ref{ball}, simply choose $u=g_{m+1}$, $u_1=\cdots=u_m=0$, and $u_{m+1}=1$
to conclude that Assumption \ref{archimedean} holds as well.
In practice, it is often easy to identify a bound $R$ on the radius of the feasibility set $K$.

Each problem in Lasserre's hierarchy consists of a primal-dual SDP pair, called SDP relaxation,
where the primal  corresponds
to a convex moment relaxation of the original (typically nonconvex) POP, and the dual  corresponds
to a SOS representation of a polynomial Lagrangian of the POP. The question arises of the absence of
duality gap in each SDP relaxation. This is of practical importance because numerical algorithms
to solve SDP problems are guaranteed to converge only where there is no duality gap,
and sometimes under the stronger assumption that there is a primal or/and dual SDP interior point.

 In \cite[Example 4.9]{schweighofer},
Schweighofer provides a two-dimensional POP with bounded $K$ with no interior point
for which Assumption \ref{archimedean} holds, yet a duality gap exists at the first
SDP relaxation: $\inf x_1x_2 \:\mathrm{s.t.}\: x \in K=\{x \in {\mathbb R}^2 \: :\:
-1 \leq x_1 \leq 1, \: x^2_2 \leq 0\}$, with
primal SDP value equal to zero and dual SDP value equal to minus infinity.
This shows that a stronger assumption is required to ensure no SDP duality gap. 
A sufficient condition for strong duality has been given in \cite{lasserre-2001}:
the interior of the POP feasibility set $K$ should be nonempty. However, this may be too restrictive:
in the proof of Lemma 1 in \cite{hl12} the authors use notationally awkward arguments
involving truncated moment matrices
to prove the absence of SDP duality gap for a set $K$ with no interior point. This shows that absence of an interior point
for $K$ is not necessary for no SDP duality gap, and a weaker assumption is welcome.

Motivated by these observations, in this note we prove that under the basic Assumption \ref{ball}
on the description of set $K$, there is no duality gap in the SDP hierarchy. In particular, this
covers the cases when $K$ has an empty interior. Our interpretation of this result,
and the main message of this contribution, is that in the context of Lasserre's hierarchy
for POP, a practically relevant description of a bounded semialgebraic feasibility set
must include a redundant ball constraint.

\section{Proof of strong duality}

For notational convenience, let $g_0(x)=1 \in {\mathbb R}[x]$ denote the unit polynomial.
Define the localizing moment matrix
\[
M_{d-d_i}(g_iy) := \left(\sum_\gamma  g_{i,\gamma} y_{\alpha+\beta+\gamma}\right)_{|\alpha|,|\beta|\leq d-d_i}
 = \sum_{|\alpha|\leq 2d} A_{i,\alpha} y_{\alpha}
\]
where $d_i$ is the smallest integer greater than or equal to half the degree of $g_i$,
for $i=0,1,\ldots,m$.
For $d \geq d_{\min}:=\max_{i=0,1,\ldots,m} d_i$, the Lasserre hierarchy for POP (\ref{eq:pop})
consists of a  primal moment SDP problem
\[
(P_d) ~:~ 
\begin{array}{ll}
\inf_y & \sum_{\alpha}  f_{\alpha} y_\alpha \\
\mathrm{s.t.} & y_0 = 1 \\
& M_{d-d_i}(g_i y) \succeq 0, \:i=0,1,\ldots,m
\end{array}
\]
and a dual SOS SDP problem
\[
(D_d) ~:~ 
\begin{array}{ll}
\sup_{z,Z} & z \\
\mathrm{s.t.} & f_0 - z =  \sum_{i=0}^m \langle A_{i,0}, Z_i \rangle \\
& f_\alpha = \sum_{i=0}^m \langle A_{i,\alpha}, Z_i \rangle, \quad 0<|\alpha|\leq 2d \\
& Z_i \succeq 0, \:i=0,1,\ldots,m
\end{array}
\]
where $A\succeq 0$ stands for matrix $A$ positive semidefinite, $\langle A,B \rangle = \mathrm{trace}\:AB$ is
the inner product between two matrices.
The primal-dual pair $(P_d,D_d)$ is called the SDP relaxation of order $d$ for POP (\ref{eq:pop}).

Let us define the following sets:
\begin{itemize}
\item $\mathcal{P}_d$: feasible points for $P_d$;
\item $\mathcal{D}_d$: feasible points for $D_d$;
\item $\mathrm{int}\:\mathcal{P}_d$: strictly feasible points for  $P_d$;
\item $\mathrm{int}\:\mathcal{D}_d$: strictly feasible points for $D_d$;
\item $\mathcal{P}^*_d$:  optimal solutions for  $P_d$;
\item $\mathcal{D}^*_d$:  optimal solutions for $D_d$.
\end{itemize}
Finally, let us denote by $\mathrm{val}\:P_d$ the infimum in problem $P_d$
and by $\mathrm{val}\:D_d$ the supremum in problem $D_d$.
Strong duality holds whenever $\mathrm{val}\:P_d=\mathrm{val}\:D_d < \infty$.

\begin{lemma}
\label{lemma:Slater}
$\mathrm{int}\:\mathcal{P}_d$ nonempty or $\mathrm{int}\:\mathcal{D}_d$ nonempty
implies $\mathrm{val}\:P_d=\mathrm{val}\:D_d$.
\end{lemma}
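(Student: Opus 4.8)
The plan is to prove Lemma \ref{lemma:Slater} as a pure fact about semidefinite programming: recognize $(P_d,D_d)$ as a primal--dual pair of conic programs and invoke the classical strong duality theorem for conic optimization under a Slater-type constraint qualification, separating off the degenerate cases with value $\pm\infty$ by weak duality.

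First I would make the conic structure explicit. Viewing $y$ as the truncated moment vector and $y \mapsto \bigl(M_{d-d_i}(g_i y)\bigr)_{i=0}^m = \bigl(\sum_{|\alpha|\le 2d} A_{i,\alpha}y_\alpha\bigr)_{i=0}^m$ as a linear map into the product of symmetric matrix spaces, problem $P_d$ is $\inf_y \langle f,y\rangle$ subject to the single affine equation $y_0=1$ and the conic inclusion $\bigl(M_{d-d_i}(g_i y)\bigr)_i \in \prod_i \mathbb{S}^+$. Pairing the equation with a scalar $z$ and the conic constraint with positive semidefinite multipliers $Z_i$, and using $\langle M_{d-d_i}(g_i y),Z_i\rangle = \sum_\alpha \langle A_{i,\alpha},Z_i\rangle\,y_\alpha$, a direct Lagrangian computation shows that the conic dual of $P_d$ is precisely $D_d$: stationarity in $y_\alpha$ for $\alpha\neq 0$ forces $f_\alpha = \sum_i\langle A_{i,\alpha},Z_i\rangle$, and the remaining terms give the objective $z$ together with $f_0-z=\sum_i\langle A_{i,0},Z_i\rangle$. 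The same identity yields, for any $y\in\mathcal{P}_d$ and $(z,Z)\in\mathcal{D}_d$, that $\langle f,y\rangle - z = \sum_i\langle M_{d-d_i}(g_i y),Z_i\rangle\ge 0$, i.e.\ weak duality $\mathrm{val}\,D_d \le \mathrm{val}\,P_d$ holds unconditionally.

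Next I would finish by a short case analysis on top of the conic strong duality theorem, which states: if the primal conic program has a strictly feasible point (here: all localizing matrices positive \emph{definite}) and finite value, then $\mathrm{val}\,P_d=\mathrm{val}\,D_d$ and the supremum in $D_d$ is attained; symmetrically, if the dual has a strictly feasible point (here: all $Z_i\succ 0$) and finite value, then $\mathrm{val}\,P_d=\mathrm{val}\,D_d$ and the infimum in $P_d$ is attained. If $\mathrm{int}\,\mathcal{P}_d\neq\emptyset$ then $P_d$ is feasible, so $\mathrm{val}\,P_d<+\infty$; when $\mathrm{val}\,P_d=-\infty$, weak duality gives $\mathrm{val}\,D_d=-\infty$ and equality holds, and when $\mathrm{val}\,P_d$ is finite the theorem gives equality. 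If instead $\mathrm{int}\,\mathcal{D}_d\neq\emptyset$ then $D_d$ is feasible, so $\mathrm{val}\,D_d>-\infty$; when $\mathrm{val}\,D_d=+\infty$, weak duality gives $\mathrm{val}\,P_d=+\infty$ and equality holds, and when $\mathrm{val}\,D_d$ is finite the dual version of the theorem gives equality. In all cases $\mathrm{val}\,P_d=\mathrm{val}\,D_d$. The only points requiring care are verifying once and for all that the two displayed SDPs really are conic duals of each other (bookkeeping with the matrices $A_{i,\alpha}$) and quoting a version of the conic duality theorem with the Slater hypothesis on the correct side and with finiteness of the value made explicit, which is exactly why the $\pm\infty$ cases must be peeled off separately; there is no deeper obstacle, since the genuine content of the note --- that Assumption \ref{ball} actually forces one of these two Slater conditions --- is the business of the results that follow.
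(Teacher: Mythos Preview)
Your proposal is correct and aligns with the paper's own treatment: the paper does not actually prove Lemma~\ref{lemma:Slater} but simply remarks that it is the classical Slater condition and cites \cite[Theorem~4.1.3]{shapiro-2000}. Your sketch---verifying that $(P_d,D_d)$ form a conic primal--dual pair, recording weak duality, and then invoking the standard conic strong-duality theorem with the $\pm\infty$ cases handled separately---is exactly the argument behind that citation, so there is nothing to add or correct.
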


Lemma \ref{lemma:Slater} is classical in convex optimization, and it is generally
called Slater's condition, see e.g. \cite[Theorem 4.1.3]{shapiro-2000}.

\begin{lemma}
\label{lemma:Trnovska}
$\mathcal{P}_d$ is nonempty and $\mathrm{int}\:\mathcal{D}_d$ is nonempty
if and only if $\mathcal{P}^*_d$ is nonempty and bounded.
\end{lemma}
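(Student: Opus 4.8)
The statement is classical in semidefinite duality, and I would establish it directly in the present notation. The only structural feature of Lasserre's relaxation that enters is that $g_0=1$, so that the full moment matrix $M_d(y)=\sum_{|\alpha|\le 2d}A_{0,\alpha}y_\alpha$ is one of the semidefinite constraints of $P_d$, and that the linear map $y\mapsto M_d(y)$ is injective: every coordinate $y_\gamma$ with $|\gamma|\le 2d$ occurs as the entry of $M_d(y)$ indexed by some splitting $\gamma=\alpha+\beta$ with $|\alpha|,|\beta|\le d$, so $M_d(y)=0$ forces $y=0$, and equivalently the adjoint of this map is onto. For brevity I write the constraints of $P_d$ as $\mathcal{A}y=b$ (meaning $y_0=1$) and $\mathcal{B}_i(y):=M_{d-d_i}(g_iy)\succeq 0$, $i=0,\dots,m$, let $\mathcal{K}$ be the cone of tuples $(Z_0,\dots,Z_m)$ of positive semidefinite matrices of the sizes occurring in $P_d$, and I recall the weak duality identity $\sum_\alpha f_\alpha y_\alpha = z\,y_0+\sum_{i=0}^m\langle\mathcal{B}_i(y),Z_i\rangle$, valid for all $y\in\mathcal{P}_d$ and $(z,Z)\in\mathcal{D}_d$.

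For the implication from $\mathcal{P}_d\neq\emptyset$ and $\mathrm{int}\,\mathcal{D}_d\neq\emptyset$ to $\mathcal{P}^*_d$ nonempty and bounded, I would fix a strictly feasible dual point $(z,Z)$, so $Z_i\succ 0$ for all $i$; weak duality together with $\mathcal{P}_d\neq\emptyset$ makes $\mathrm{val}\,P_d$ finite (one could also invoke Lemma~\ref{lemma:Slater} here, but it is not needed). The plan is to show that every sublevel set $S_\lambda:=\{y\in\mathcal{P}_d:\sum_\alpha f_\alpha y_\alpha\le\lambda\}$ is bounded; being closed it is then compact, so for $\lambda>\mathrm{val}\,P_d$ the infimum is attained and $\mathcal{P}^*_d=S_{\mathrm{val}\,P_d}$ is bounded. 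If some $S_\lambda$ were unbounded, take $y^k\in S_\lambda$ with $\|y^k\|\to\infty$; a subsequential limit $h$ of $y^k/\|y^k\|$ has $\|h\|=1$, $h_0=0$ (from $y^k_0=1$), $\mathcal{B}_i(h)\succeq 0$ for all $i$ (by homogeneity of $\mathcal{B}_i$ and closedness of the semidefinite cone), and $\sum_\alpha f_\alpha h_\alpha\le 0$. Inserting $h$ into the weak duality identity, legitimate since $h_0=0$, gives $0\ge\sum_\alpha f_\alpha h_\alpha=\sum_i\langle\mathcal{B}_i(h),Z_i\rangle\ge 0$, so each $\langle\mathcal{B}_i(h),Z_i\rangle=0$; with $Z_i\succ 0$ this forces $\mathcal{B}_i(h)=0$, in particular $M_d(h)=0$, hence $h=0$, contradicting $\|h\|=1$.

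For the converse, $\mathcal{P}_d\supseteq\mathcal{P}^*_d\neq\emptyset$ is immediate, so it remains to produce a strictly feasible dual point. A routine computation identifies the recession cone of $\mathcal{P}^*_d$ with $C_0=\{h:h_0=0,\ \mathcal{B}_i(h)\succeq 0\ \forall i,\ \sum_\alpha f_\alpha h_\alpha\le 0\}$, so boundedness of $\mathcal{P}^*_d$ says $C_0=\{0\}$; equivalently, on the closed convex cone $C=\{h:h_0=0,\ \mathcal{B}_i(h)\succeq 0\ \forall i\}$ the linear form $h\mapsto\sum_\alpha f_\alpha h_\alpha$ is strictly positive off the origin. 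Since $C\cap\{\|h\|=1\}$ is compact, this form is bounded below by $\varepsilon\|h\|$ on $C$ for some $\varepsilon>0$, so the coefficient vector $f=(f_\alpha)$ lies in the topological interior of the dual cone $C^*$. Polar calculus — dual of an intersection is the closure of the sum of the duals, the semidefinite cone is self-dual, $(\ker\mathcal{A})^*=\mathrm{range}\,\mathcal{A}^*$ — identifies $C^*$ with $\overline{E}$, where $E:=\mathrm{range}\,\mathcal{A}^*+\mathcal{B}^*\mathcal{K}$ and $\mathcal{B}^*Z:=\sum_i\mathcal{B}_i^*Z_i$. Now $E$ is convex, and it contains the image under $\mathcal{B}^*$ of the tuples $(Z_0,0,\dots,0)$ with $Z_0\succ 0$; since the adjoint of $y\mapsto M_d(y)$ is onto $\mathbb{R}^N$, this image is a nonempty open set (image of an open set under a surjective linear map), so $E$ has nonempty interior and hence $\mathrm{int}\,\overline{E}=\mathrm{int}\,E$, giving $f\in\mathrm{int}\,E$. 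Finally, replacing each $Z_i$ by $Z_i+\varepsilon I$ shows $E\subseteq\overline{E^\circ}$ for the open convex set $E^\circ:=\mathrm{range}\,\mathcal{A}^*+\mathcal{B}^*(\mathrm{int}\,\mathcal{K})$, whence $\mathrm{int}\,E=\mathrm{int}\,\overline{E^\circ}=E^\circ$; thus $f\in E^\circ$, i.e.\ $f=\mathcal{A}^*z+\sum_i\mathcal{B}_i^*Z_i$ with every $Z_i\succ 0$, which, unwinding the definitions of $\mathcal{A}$ and $\mathcal{B}_i$, is exactly a point of $\mathrm{int}\,\mathcal{D}_d$.

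The delicate part is the converse: the cones $\mathcal{B}_i^*\mathcal{K}$ and their sum $E$ need not be closed, so the passage from ``$f$ lies in the interior of the dual of the recession cone of $\mathcal{P}_d$'' to ``$D_d$ has a genuine strictly feasible point'' has to go through the triple $E\subseteq\overline{E}=\overline{E^\circ}$ with $E^\circ$ open. It is precisely there that the injectivity of $y\mapsto M_d(y)$ — equivalently, the presence of the constraint $M_d(y)\succeq 0$ in $P_d$ — is indispensable, since it is what makes $E$ solid.
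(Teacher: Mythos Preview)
Your proof is correct, and in fact you have done more than the paper does: the paper supplies no argument for this lemma at all, merely referring to \cite{trnovska-2005} for a proof of the general SDP statement. So there is no ``paper's proof'' to compare against, only an external citation.

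A few remarks on what your argument adds. The result in \cite{trnovska-2005} is stated for SDPs in standard form, whereas $P_d$ is written with a free variable $y$ and conic constraints on linear images $\mathcal{B}_i(y)$. To transport the general result into this setting one must know that the map $y\mapsto (\mathcal{B}_0(y),\dots,\mathcal{B}_m(y))$ is injective, and you correctly isolate this as the one structural fact about Lasserre's relaxation that is needed: because $g_0=1$, the block $\mathcal{B}_0(y)=M_d(y)$ already determines $y$. You use this twice --- in the forward direction to pass from $M_d(h)=0$ to $h=0$, and in the converse to make $E$ (and $E^\circ$) full-dimensional via surjectivity of $\mathcal{B}_0^*$. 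That is exactly the right ingredient, and without it the equivalence could fail for a generic conic program in this non-standard form.

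The polar--calculus passage in the converse is the subtle step and you handle it carefully: the chain
\[
f\in\mathrm{int}\,C^*=\mathrm{int}\,\overline{E}=\mathrm{int}\,E=E^\circ
\]
is justified because $E$ is convex with nonempty interior (so $\mathrm{int}\,\overline{E}=\mathrm{int}\,E$) and because $E^\circ\subseteq E\subseteq\overline{E^\circ}$ with $E^\circ$ open convex (so $\mathrm{int}\,E=E^\circ$). One cosmetic point: the ``weak duality identity'' $\sum_\alpha f_\alpha y_\alpha=z\,y_0+\sum_i\langle\mathcal{B}_i(y),Z_i\rangle$ in fact holds for \emph{all} $y$, not only $y\in\mathcal{P}_d$, so inserting the recession direction $h$ needs no special justification; the phrase ``legitimate since $h_0=0$'' is harmless but unnecessary.
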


A proof of Lemma \ref{lemma:Trnovska} can be found in \cite{trnovska-2005}.
According to Lemmas \ref{lemma:Slater} and \ref{lemma:Trnovska},
$\mathcal{P}_d^*$ nonempty and bounded implies strong duality.
This result is also mentioned without proof at the end of \cite[Section 4.1.2]{shapiro-2000}.

\begin{lemma}
\label{lemma:bound}
Under Assumption \ref{ball}, set $\mathcal{P}_d$ is included in the Euclidean ball of radius $\sum_{k=0}^{d} R^{2k}$.
\end{lemma}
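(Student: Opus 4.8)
The plan is to bound the moment variables $y_\alpha$ for $y \in \mathcal{P}_d$ directly, using only the positive semidefiniteness of the moment matrix $M_d(y) = M_{d-d_0}(g_0 y)$ together with the localizing matrix coming from the redundant ball constraint $g_{m+1}$. The key observation is the classical fact that for a positive semidefinite moment matrix, the diagonal entries control everything: if $M_d(y) \succeq 0$ and $y_0 = 1$, then each $2\times 2$ principal minor formed by the $(0,0)$ entry, the $(\alpha,\alpha)$ entry and the off-diagonal $y_{2\alpha}$ forces $|y_\alpha|^2 \le y_0 \, y_{2\alpha} = y_{2\alpha}$ when $\alpha$ has even... more carefully, $|y_{\alpha+\beta}| \le \sqrt{y_{2\alpha}}\sqrt{y_{2\beta}}$ for all $|\alpha|,|\beta| \le d$, so it suffices to bound the ``doubled'' diagonal entries $y_{2\alpha}$ for $|\alpha| \le d$.

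First I would exploit the ball constraint. The localizing constraint $M_{d-1}(g_{m+1}\, y) \succeq 0$ has a diagonal whose entry indexed by $\alpha$ (with $|\alpha| \le d-1$) equals $R^2 y_{2\alpha} - \sum_{j=1}^n y_{2\alpha + 2e_j}$, and positive semidefiniteness of a matrix forces its diagonal to be nonnegative. Hence $\sum_{j=1}^n y_{2\alpha+2e_j} \le R^2 y_{2\alpha}$ for every $|\alpha| \le d-1$. Starting from $y_0 = 1$ and iterating this inequality degree by degree, I would show by induction on $|\alpha|$ that $y_{2\alpha} \le R^{2|\alpha|}$ for all $|\alpha| \le d$: summing over all $\alpha$ of a given degree $k$, the total $\sum_{|\alpha|=k} y_{2\alpha}$ is at most $R^{2}$ times $\sum_{|\alpha|=k-1} y_{2\alpha}$, which by induction is at most $R^{2k}$; since each term is nonnegative (again a diagonal entry of $M_d(y) \succeq 0$), each individual $y_{2\alpha} \le R^{2k}$.

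Next I would assemble the Euclidean norm bound on the vector $y = (y_\alpha)_{|\alpha|\le 2d}$. Using $|y_{\alpha+\beta}|^2 \le y_{2\alpha}\, y_{2\beta}$ and the degree bounds just established, each component $y_\gamma$ with $|\gamma| = \ell \le 2d$ satisfies $|y_\gamma| \le R^{\ell} \le \sum_{k=0}^{d} R^{2k}$ when one is slightly generous (note $R^\ell \le R^{2\lceil \ell/2\rceil} \le \sum_{k=0}^d R^{2k}$ since $\lceil \ell/2 \rceil \le d$). A cleaner route to exactly the stated radius $\sum_{k=0}^d R^{2k}$: observe that $\|y\|^2 = \sum_\gamma y_\gamma^2$, and group the indices; one shows $\sum_{|\gamma|\le 2d} y_\gamma^2 \le \big(\sum_{k=0}^d R^{2k}\big)^2$ by a Cauchy--Schwarz / trace argument on $M_d(y)$, since $\|y\|$ is dominated by the trace of $M_d(y)$, and $\operatorname{trace} M_d(y) = \sum_{|\alpha|\le d} y_{2\alpha} \le \sum_{k=0}^d R^{2k}$ by the previous paragraph.

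The main obstacle is purely bookkeeping: matching the constant exactly to $\sum_{k=0}^d R^{2k}$ rather than some larger combinatorial expression, and being careful that every inequality used is genuinely a consequence of a \emph{diagonal} entry or a $2\times 2$ \emph{principal minor} of one of the positive semidefinite matrices $M_{d-d_i}(g_i y)$ — in particular $M_d(y)$ for the off-diagonal bounds and $M_{d-1}(g_{m+1} y)$ for the recursion. No interior-point or Slater-type hypothesis enters; only the semidefinite constraints defining $\mathcal{P}_d$ and the presence of the ball constraint are used, which is exactly what is needed to then invoke Lemmas \ref{lemma:Slater} and \ref{lemma:Trnovska} to conclude strong duality in the subsequent theorem.
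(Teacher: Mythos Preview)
Your approach is essentially the same as the paper's. Both arguments proceed in two stages: (i) use the positive semidefiniteness of the localizing matrix for the ball constraint to bound $\operatorname{trace} M_d(y) = \sum_{|\alpha|\le d} y_{2\alpha}$ by $\sum_{k=0}^{d} R^{2k}$, and (ii) deduce $\|y\|_2 \le \operatorname{trace} M_d(y)$. For stage (i) the paper computes the full trace of $M_{k-1}(g_{m+1}y)$ to obtain the recursion $\operatorname{trace} M_k(y) \le 1 + R^2 \operatorname{trace} M_{k-1}(y)$, while you sum the individual diagonal entries $R^2 y_{2\alpha}-\sum_j y_{2\alpha+2e_j}\ge 0$ over $|\alpha|=k-1$; these are the same computation up to bookkeeping (your version even handles the overcounting of $y_{2\beta}$ more transparently, since each $\beta$ with $|\beta|=k$ arises from several $(\alpha,e_j)$ pairs and you use $y_{2\beta}\ge 0$ to discard the excess).

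The one place where your sketch is genuinely incomplete is stage (ii): you assert that ``$\|y\|$ is dominated by the trace of $M_d(y)$'' via ``a Cauchy--Schwarz / trace argument'' without saying what that argument is. The paper's justification is worth knowing: writing $M_d(y)=\sum_{|\alpha|\le 2d} A_\alpha y_\alpha$, the matrices $A_\alpha$ are pairwise orthogonal in the trace inner product with $\langle A_\alpha,A_\alpha\rangle\ge 1$, so $\|M_d(y)\|_F^2=\sum_\alpha \langle A_\alpha,A_\alpha\rangle y_\alpha^2 \ge \|y\|_2^2$; and for a positive semidefinite matrix $\|M_d(y)\|_F \le \operatorname{trace} M_d(y)$ since $\sqrt{\sum \lambda_i^2}\le \sum \lambda_i$ when all $\lambda_i\ge 0$. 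Your alternative route through the $2\times 2$ minors $|y_{\alpha+\beta}|^2\le y_{2\alpha}y_{2\beta}$ gives correct componentwise bounds but, as you note, does not by itself yield the exact radius $\sum_{k=0}^d R^{2k}$.
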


\begin{proof}
Consider a feasible point $(y_\alpha)_{|\alpha| \leqslant 2d} \in \mathcal{P}_d$.
 Let $k \in {\mathbb N}$ be such that
$1 \leq k \leq d$. In the SDP problem $P_k$,
the localizing matrix associated to the redundant ball constraint $g_{m+1}(x) = R^2 - \sum_{i=1}^n x^2_i \geq 0$ reads
\[
M_{k-1}(g_{m+1} y) = \left(\sum_\gamma g_{m+1,\gamma} ~ y_{\alpha+\beta+\gamma}\right)_{|\alpha|,|\beta|\leq k-1}
\]
with trace equal to
$$ 
\begin{array}{rcl}
\mathrm{trace}\:M_{k-1}(g_{m+1} y) & = & \sum_{|\alpha|\leqslant k-1} \sum_\gamma g_{m+1,\gamma} ~ y_{2\alpha+\gamma} \\\\
  & = & \sum_{|\alpha|\leq k-1} \left( g_{m+1,0} ~ y_{2\alpha} + \sum_{|\gamma|=1} g_{m+1,2\gamma} ~ y_{2\alpha+2\gamma} \right) \\\\
  & = & \sum_{|\alpha|\leq k-1} \left( R^2 y_{2\alpha} - \sum_{|\gamma|=1} y_{2(\alpha+\gamma)} \right) \\\\
  & = & \sum_{|\alpha|\leq k-1} R^2 y_{2\alpha} - \sum_{|\alpha|\leq k-1,|\gamma|=1} y_{2(\alpha+\gamma)} \\\\
  & = & R^2 (\sum_{|\alpha|\leq k-1} y_{2\alpha}) + y_0 - \sum_{|\alpha|\leq k} y_{2\alpha} \\\\
  & = & R^2 \:\mathrm{trace}\:M_{k-1}(y) + 1 -  \mathrm{trace}\:M_{k}(y).\\
\end{array}
$$
From the structure of the localizing matrix, it holds $M_{k-1}(g_{m+1} y) \succeq 0$ hence
$\mathrm{trace}\:M_{k-1}(g_{m+1} y) \geq 0$ and
\[
\mathrm{trace}\:M_k(y) \leq 1 + R^2\:\mathrm{trace}\:M_{k-1}(y)
\]
from which we derive
\[
\mathrm{trace}\:M_d(y) \leq  \sum_{k=1}^{d} R^{2(k-1)} + R^{2d}\:\mathrm{trace}\:M_0(y) =  \sum_{k=0}^{d} R^{2k} 
\]
since $\mathrm{trace}\:M_0(y) = y_0 = 1$.
The norm  $\|M_{d}(y)\|_2$, equal to the maximum eigenvalue of $M_d(y)$, is upper bounded
by $\mathrm{trace}\:M_d(y)$, the sum of the eigenvalues of $M_d(y)$, which are all nonnegative.
Moreover
$$\begin{array}{rcl}
\|M_d(y)\|_2^2 & = & \langle ~ \sum_{|\alpha|\leq 2d} A_\alpha y_\alpha ~,~ \sum_{|\alpha|\leq 2d} A_\alpha y_\alpha ~ \rangle \\\\
 & = & \sum_{|\alpha|\leq  2d} ~ \langle A_\alpha,A_\alpha \rangle ~ y_\alpha^2 ~~~ \text{by orthogonality of the matrices
$(A_\alpha)_{|\alpha|\leq  2d}$}\\\\
 & \geq  & \sum_{|\alpha|\leq  2d} ~ y_\alpha^2 ~~~ \text{because $\langle A_\alpha,A_\alpha \rangle \geq  1$}.
\end{array} 
$$
The proof follows then from
\[
\sqrt{\sum_{|\alpha|\leq  2d} y^2_\alpha} \leq  \|M_d(y)\|_2 \leq \sum_{k=0}^{d} R^{2k}.
\]
\end{proof}

\begin{theorem}
\label{theorem:strong duality}
Under Assumption \ref{ball}, strong duality holds for SDP relaxations of all orders.
\end{theorem}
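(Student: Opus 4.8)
The plan is to assemble the theorem from the three lemmas already in place. The key observation is that Lemma~\ref{lemma:bound} gives us precisely the boundedness of $\mathcal{P}_d$ under Assumption~\ref{ball}, and boundedness of $\mathcal{P}_d$ is exactly the ingredient needed to invoke Lemma~\ref{lemma:Trnovska} in the direction that yields strong duality via Lemma~\ref{lemma:Slater}.

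First I would verify that $\mathcal{P}_d$ is nonempty: since $K$ is nonempty under the standing hypothesis, pick any $x^* \in K$ and set $y_\alpha = (x^*)^\alpha$ for $|\alpha| \leq 2d$; then $M_{d-d_i}(g_i y) = g_i(x^*) \cdot v_{d-d_i}(x^*) v_{d-d_i}(x^*)^T \succeq 0$ for each $i$ (where $v$ denotes the vector of monomials), and $y_0 = 1$, so this $y$ lies in $\mathcal{P}_d$. Next, by Lemma~\ref{lemma:bound}, $\mathcal{P}_d$ is contained in a Euclidean ball of radius $\sum_{k=0}^d R^{2k}$, hence bounded; being also closed (it is defined by the closed conditions $y_0 = 1$ and finitely many positive-semidefiniteness constraints, which are closed), $\mathcal{P}_d$ is compact. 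Now consider two cases. If $\mathrm{val}\:P_d = +\infty$, then $\mathcal{P}_d$ is empty, contradicting what we just showed; so in fact $\mathrm{val}\:P_d$ is finite. Since the objective $y \mapsto \sum_\alpha f_\alpha y_\alpha$ is linear (hence continuous) and $\mathcal{P}_d$ is nonempty and compact, the infimum is attained: $\mathcal{P}^*_d$ is nonempty. Moreover $\mathcal{P}^*_d \subseteq \mathcal{P}_d$ is bounded.

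Having established that $\mathcal{P}^*_d$ is nonempty and bounded, I would then apply Lemma~\ref{lemma:Trnovska} (in the ``if'' direction, reading the equivalence from right to left) to conclude that $\mathrm{int}\:\mathcal{D}_d$ is nonempty. Finally, Lemma~\ref{lemma:Slater} applied with the nonempty $\mathrm{int}\:\mathcal{D}_d$ gives $\mathrm{val}\:P_d = \mathrm{val}\:D_d$, and since we have shown this common value is finite, strong duality holds for the relaxation of order $d$. As $d \geq d_{\min}$ was arbitrary, strong duality holds for SDP relaxations of all orders.

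There is no real obstacle here: the substantive work was done in Lemma~\ref{lemma:bound}, and the theorem is a short deduction. The only points requiring a word of care are (i) noting that $\mathcal{P}_d \neq \emptyset$, which uses that $K \neq \emptyset$ via the moment sequence of a feasible point, and (ii) noting that compactness of $\mathcal{P}_d$ promotes ``bounded feasible set'' to ``attained optimum with bounded optimal set,'' so that the hypotheses of Lemma~\ref{lemma:Trnovska} are genuinely met and the common optimal value is finite rather than $\pm\infty$.
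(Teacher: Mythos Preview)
Your proposal is correct and follows essentially the same route as the paper's own proof: exhibit a feasible moment vector $y^*=v_{2d}(x^*)$ for some $x^*\in K$ to get $\mathcal{P}_d\neq\emptyset$, use Lemma~\ref{lemma:bound} for boundedness (and closedness) so that $\mathcal{P}^*_d$ is nonempty and bounded, then apply Lemma~\ref{lemma:Trnovska} and finally Lemma~\ref{lemma:Slater}. Your additional remarks on compactness and finiteness of the optimal value are a bit more explicit than the paper's version but do not change the argument.
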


\begin{proof}
Let us first show that $\mathcal{P}_d$ is nonempty. 
Let $v_d(x):=(x^{\alpha})_{|\alpha|\leq d}$ and
consider a feasible point $x^* \in K$ for POP (\ref{eq:pop}).
Then $y^*=v_{2d}(x^*) \in {\mathcal P}_d$ since by construction
$M_{d-d_i}(g_i y^*) = g_i(x^*) v_{d-d_i}(x^*) v_{d-d_i}(x^*)^T \succeq 0$ for all $i$.
From Lemma \ref{lemma:bound}, $\mathcal{P}_d$ is bounded and closed, and the objective function in $P_d$
is linear, so we conclude that
${\mathcal P}^*_d$ is nonempty and bounded. According to Lemma \ref{lemma:Trnovska}, $\mathrm{int}\:\mathcal{D}_d$
is nonempty, and from Lemma \ref{lemma:Slater} strong duality holds.
\end{proof}

\section{Conclusion}

We prove that strong duality always holds in Lasserre's SDP hierarchy for POP
on bounded semi-algebraic sets after adding a redundant ball constraint.
To preclude numerical troubles with SDP solvers, we advise
to systematically add such a ball constraint, combined with an appropriate scaling
so that all scaled variables belong to the unit sphere. Without scaling,
numerical troubles can occur as well, but they are not due to the presence
of a duality gap, see \cite{hl05} and also the example of \cite[Section 6]{wnm12}.


\begin{thebibliography}{1}

\bibitem{hl05}
D.~Henrion, J.~B.~Lasserre.
\newblock Detecting global optimality and extracting solutions in GloptiPoly.
\newblock In: D.~Henrion, A.~Garulli (eds.).
\newblock {\em Positive Polynomials in Control}.
\newblock Lecture Notes on Control and Information Sciences 312, Springer, Berlin, 2005.

\bibitem{hl12}
D.~Henrion, J.~B.~Lasserre.
\newblock Inner approximations for polynomial matrix inequalities and robust stability regions.
\newblock {\em IEEE Transactions on Automatic Control}, 57(6):1456--1467, 2012.

\bibitem{lasserre-2000}
J.~B. Lasserre.
\newblock Optimisation globale et th\'eorie des moments.
\newblock {\em C.\ R.\ Acad.\ Sci.\ Paris, S\'erie I}, 331:929--934, 2000.

\bibitem{lasserre-2001}
J.~B. Lasserre.
\newblock Global optimization with polynomials and the problem of moments.
\newblock {\em SIAM Journal on Optimization}, 11:796--817, 2001.

\bibitem{nie14}
J.~Nie.
\newblock Optimality conditions and finite convergence of Lasserre's hierarchy.
\newblock {\em Mathematical Programming}, Ser. A, to appear, 2014.

\bibitem{schweighofer}
M.~Schweighofer.
\newblock Optimization of polynomials on compact semialgebraic sets.
\newblock {\em SIAM Journal on Optimization}, 15:805--825, 2005.

\bibitem{shapiro-2000}
A.~Shapiro.
\newblock {\em Duality, optimality conditions and perturbation analysis}.
\newblock In: H.~Wolkowicz, R.~Saigal, L.~Vandenberghe (eds.).
\newblock {\em Handbook of Semidefinite Programming. Theory, Algorithms, and Applications}.
\newblock Kluwer Academic Publishers, Boston, 2000.

\bibitem{trnovska-2005}
M.~Trnovsk\'a.
\newblock Strong duality conditions in semidefinite programming.
\newblock {\em Journal of Electrical Engineering}, 56:1--5, 2005.

\bibitem{wnm12}
H.~Waki, M.~Nakata, M.~Muramatsu.
\newblock Strange behaviors of interior-point methods for solving semidefinite programming problems in polynomial optimization.
\newblock {\em Computational Optimization and Applications}, 53(3):823--844, 2012.

\end{thebibliography}
\end{document}